\documentclass[12pt]{article}
\textheight=8.8truein\voffset=-.5truein
\textwidth=6.5truein\hoffset=-.5truein
\usepackage{amssymb,amsmath,amsthm,epsf}
\def\bbc{{\mathbb C}}  \def\bbr{{\mathbb R}}

\def\ep{\epsilon}

\def\lgrarrow{\mathrel{\hbox to 30pt{\rightarrowfill}}}
\def\eop{\hfill$\Box$}
\newtheorem{thm}{Theorem}
\newtheorem{lem}{Lemma}

\setcounter{section}{0}
\begin{document}
\title{Measures for orthogonal polynomials  with unbounded recurrence coefficients}

\author{A. I. Aptekarev\\ Keldysh Institute for Applied Mathematics\\ Russiand Academy of Sciences \\ Musskaya pl. 4, 12507\\ Moscow, Russia \\and\\ J.\ S.\ Geronimo\\School of Mathematics \\ Georgia Institute of Technology,\\ Atlanta, Ga, 30332,\ USA \\ and \\ Tao Aoqing Visiting Professor\\ Jilin University\\ Changchun, China}

\maketitle
\baselineskip=22pt

\begin{abstract}
 Systems of orthogonal polynomials whose recurrence coefficients tend to
infinity are considered. A summability condition is imposed on the coefficients
and the consequences for the measure of orthogonality are discussed. Also 
discussed are  asymptotics for the polynomials.

\noindent \textbf{Keywords}: Orthogonal polynomials, unbounded recurrence coefficients,
measures.

\noindent \textbf{Mathematics Subject Classification Numbers:} 42C05, 41A60.

\end{abstract}

\section{Introduction}
Let  $\{ p_{n}(x)\}^\infty_{n=-1}$ be a system of polynomials 
satisfying
the recurrence relations
\begin{equation}\label{eq0.1}
a_{n+1} p_{n+1}(x)+b_{n} p_{n}(x)+ a_{n} p_{n-1}(x)=x
p_{n}(x), \qquad  p_{0}(x)=1,\qquad p_{-1}(x)=0,
\end{equation}
with $a_{n+1}>0$ and $b_n$ real for $n=0,1,\ldots$.
By a theorem of Farvard these polynomials are orthonormal with respect to some positive probability measure supported on the real line. 
There has been much work done on the asymptotics or spectral properties of polynomials whose recurrence coefficients are
unbounded \cite{d, dp, jn, ns} and here we are interested in the problem of constructing
the orthogonality measure given the coefficients in the recurrence formula.
This problem for the bounded case has been an area of ongoning intense investigation due to its connection to the discrete Schr\"odinger equation \cite{dk, den, geca, gi, ger, ks, l, mn, mnt, nev, Va, vg}  and to the connection between the continuum limits of the recurrence relations with varying recurrence coefficients and  discrete integrable systems \cite{DM, Arno-Walter}. However with regard to the construction of the measure of orthogonality almost all the results are for cases
with bounded recurrence coefficients. Unfortunately many of the techniques developed for
the bounded case cannot be applied to the unbounded case. Careful analysis of the work of  M\'at\'e-Nevai \cite{mn}, M\'at\'e-Nevai-Totik \cite{mnt}, and VanAssche-Geronimo \cite{vg}  for coefficients that are of bounded variation i.e.
$$
\sum_{n=1}^{\infty}|a_{n+1}-a_n|+|b_{n}-b_{n-1}|<\infty
$$
with limits $a_n\to \frac{1}{2}$ and $b_n\to0$ reveals
that it is possible to modify these techniques so that they apply to certain
cases when the recurrence coefficients tend to infinity. For bounded recurrence coefficients obeying the above criteria  the absolutely continuous part of the orthogonality measure is given by
\begin{equation}\label{eq0.2}
du_{ac}=\frac2\pi\frac{\sqrt{1-x^2}dx}{|\xi(x)|^2
\prod^\infty_{n=1}|\zeta_n|^2(x)},
\end{equation}
 where
\begin{equation*}
\zeta_n(x):=\frac{x-b_n+\sqrt{(x-b_n)^2-4a^2_n}} {2a_n}
\end{equation*}
is the mapping function $z = \zeta_n (x)$ of
$\overline{\bbc}\backslash(\alpha_n,\,\,\beta_n)$ on $\overline{\bbc} \backslash\{|z|\le 1\}$
normalized as $\zeta (x) / x > 0$ when $x \rightarrow \infty$
, and for $\xi(x)$ we have the expressions
\begin{equation*}
\xi(x)=1+\sum^\infty_{k=1}\left\{
\frac1{\zeta_k\left(x\right)}-
  \frac{a_{k}/a_{k+1}}{\zeta_{k+1}\left(x\right)}\right\}
 \frac{p_{k-1}(x)}{\prod^{k}_{j=1}\zeta_j(x)}\,,
\end{equation*}
or
\begin{equation*}
\xi(x)=\lim_{n\to\infty} \frac{p_{n}(x)-\frac{b_{n}}{b_{n+1}}\,
p_{n-1}(x)/\zeta_{n+1}\left(x\right)}{\prod^n_{j=1} \zeta_{j}\left(x\right)}\,,
\end{equation*}
An extension of this formula was used in \cite{AGVa} to show the connection between the limit of varying recurrence coefficients and their corresponding orthogonality measure and an analog of this formula for the unbounded case is necessary in order to extend the above results to the unbounded case. This is done in the next section.

\section{Unbounded Coefficients}

We now consider the case when $a_n>0, n>0$ and $b_n$ real for $n\ge0$  tend in
magnitude to infinity when $n$ tends to infinity i.e.
$$\lim_{n\to\infty} a_n=\infty\quad\mbox{and}\quad
\lim_{n\to\infty} |b_n|=\infty.$$
Let $\rho (z)=z+\sqrt{z^2-1}$, $t_{1,n} = \sqrt{\frac{a_n}{a_{n+1}}}
\rho\left(\frac{x-b_n}{2\sqrt{a_na_{n+1}}}\right)$, and $t_{2,n} =
\sqrt{\frac{a_n}{a_{n+1}}}
\rho\left(\frac{x-b_n}{2\sqrt{a_na_{n+1}}}\right)^{-1}$. We also set
$t_{i,j}=1,\ i=1,2,\ j=-1,0$. Since $\rho$ maps the exterior of $[-1,1]$ to the exterior of the unit circle these functions are nonzero.  Let
\begin{equation}\label{eq1}
\phi_n(x)=p_n(x)-t_{2,n}p_{n-1}(x),\end{equation}
and
$$
\phi^1_n(z)=p^1_n(x)-t_{2,n}p^1_{n-1}(x)
$$
where $p^1_j\ j=1,2..$ are the second kind polynomials of degree $j-1$ (we take $p^1_0=0$). Using the recurrence formula for the orthogonal polynomials and (\ref{eq1}) yields
\begin{equation}\label{recurphi}
\phi_{n+1}-t_{1,n}\phi_n=(t_{2,n}-t_{2,n+1})p_n.
\end{equation}
We consider the systems of recurrence coefficients given by
$\left\{a^{n_0}_{n},b^{n_0}_{n-1}\right\}^\infty_{n=1}$,
$n_0=1,2,\dots$ where
$$a^{n_0}_i=\begin{cases}a_i & i<n_0\\ a_{n_0} & i\ge n_0\end{cases}$$
and
$$b^{n_0}_i=\begin{cases}b_i & i<n_0\\ b_{n_0} & i\ge n_0\end{cases}$$
Let $p^{n_0}_i(x)$ be the polynomials constructed satisfying
\begin{equation}\label{eq2}
a^{n_0}_{i+1}p^{n_0}_{i+1}(x)+b^{n_0}_i p^{n_0}_i(x)+
a^{n_0}_ip^{n_0}_{i-1}(x)=xp^{n_0}_i(x)\end{equation}
$p^{n_0}_{-1}(x)=0$, $p^{n_0}_0(x)=1$. Set $\phi^{n_0}_{n_0}=p^{n_0}_{n_0}-\rho(\frac{x-b_{n_0}}{2a_{n_0}})^{-1}p^{n_0}_{n_0-1}$ and $\phi^{n_0,1}_{n_0}=p^{n_0,1}_{n_0}-\rho(\frac{x-b_{n_0}}{2a_{n_0}})^{-1}p^{n_0,1}_{n_0-1}$.
Then it is well known 
Geronimo and Case \cite{geca}, Geronimo and Iliev \cite{gi}, Geronimus \cite{ger}, Nevai \cite{nev} that the above polynomials are orthonormal
with respect to a measure $\mu^{n_0}$ where $\mu^{n_0}$ is given by
$$d\mu^{n_0}(x)=\begin{cases}\displaystyle
\frac{\sqrt{4a^2_{n_0}-(x-b_{n_0})^2}}
{2|\phi^{n_0}_{n_0}(x)|^2a^2_{n_0}\pi} dx &  x\in\left[b_{n_0}-2a_{n_0},
b_{n_0}+2a_{n_0}\right]\\
\displaystyle\sum^{m(n_0)}_{i=1}\mu^{n_0}_i\delta (x-x^{n_0}_i)dx & x \text{ elsewhere},
\end{cases}$$

where
$$
\mu^{n_0}_i=\frac{\phi_{n_0}^{n_0,1}(x^{n_0}_i)}{\frac{d\phi_{n_0}^{n_0}(x^{n_0}_i)}{dx}}.
$$
Here $x_i$ is a zero of $\phi^{n_0}_{n_0}$ for
$x\in\bbr\setminus\left[a_{n_0}-2b_{n_0}, a_{n_0}+2b_{n_0}\right]$ all of
which are real and  simple. The following Lemma is found in \cite{gi} and has a simple proof.
\begin{lem} \label{lem1}
Suppose that the moment problem is determinate then $\mu^{n_0}(x)$
converge weakly to $\mu(x)$ where $\mu(x)$ is the orthogonality measure
associated with the polynomials $\{p_i(x)\}^\infty_{i=-1}$.
\end{lem}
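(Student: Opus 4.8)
The plan is to exploit the fact that the truncated recurrence coefficients $\{a^{n_0}_i,b^{n_0}_{i-1}\}$ agree with the original coefficients $\{a_i,b_{i-1}\}$ for all indices $i\le n_0$, so that the low-order moments of $\mu^{n_0}$ stabilize to those of $\mu$, and then to combine Helly's selection theorem with the determinacy hypothesis to upgrade this to weak convergence.

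First I would record the elementary observation that, since $a^{n_0}_i=a_i$ and $b^{n_0}_{i-1}=b_{i-1}$ for $i<n_0$, recurrence (\ref{eq2}) is literally recurrence (\ref{eq0.1}) in the range $i<n_0$ with the same initial data, whence $p^{n_0}_i(x)=p_i(x)$ for $0\le i\le n_0$. Writing $x^k=\sum_{j=0}^{k}c_{kj}p_j(x)$ (a valid expansion because $\deg p_j=j$), the coefficients $c_{kj}$ depend only on $p_0,\dots,p_k$, hence only on $a_1,\dots,a_k,b_0,\dots,b_{k-1}$; the analogous coefficients $c^{n_0}_{kj}$ for the expansion in the $p^{n_0}_j$ therefore coincide with the $c_{kj}$ as soon as $n_0\ge k$. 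Using $p^{n_0}_0=p_0=1$ and the orthonormality of each family, one gets $\int x^k\,d\mu^{n_0}=c^{n_0}_{k0}=c_{k0}=\int x^k\,d\mu$ for every $n_0\ge k$. In particular, for each fixed $k$ the moments $\int x^k\,d\mu^{n_0}$ converge to $\int x^k\,d\mu$ (indeed they are eventually equal).

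Next, since each $\mu^{n_0}$ is a probability measure on $\bbr$, Helly's selection theorem produces a subsequence $\mu^{n_0'}$ converging vaguely to some nonnegative measure $\nu$ on $\bbr$. The step requiring care — and the one I expect to be the main obstacle in this unbounded setting — is to rule out loss of mass at infinity, so that $\nu$ is still a probability measure and the moments survive the limit. I would obtain this from a uniform tail estimate: because $\int x^2\,d\mu^{n_0}=\int x^2\,d\mu=:M$ for all $n_0\ge 2$, Chebyshev's inequality gives $\mu^{n_0}\bigl(\bbr\setminus[-R,R]\bigr)\le M/R^2$ uniformly in $n_0$, so the family $\{\mu^{n_0}\}$ is tight; the same uniform control of $\int|x|^{k+1}\,d\mu^{n_0}$ supplies the uniform integrability needed to pass to the limit in the $k$-th moment, yielding $\int x^k\,d\nu=\lim_{n_0'}\int x^k\,d\mu^{n_0'}=\int x^k\,d\mu$ for every $k$, and in particular $\nu(\bbr)=1$.

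Finally, $\nu$ and $\mu$ share the same moment sequence; by the determinacy hypothesis this forces $\nu=\mu$. Since the vague (now weak, by tightness) limit along an arbitrary convergent subsequence is thus always $\mu$, the full sequence $\mu^{n_0}$ converges weakly to $\mu$. Apart from the tightness/no-escape-of-mass argument, every step is routine bookkeeping with the recurrence relation together with the standard moment-problem criterion for weak convergence.
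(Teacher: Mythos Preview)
Your argument is correct and follows essentially the same route as the paper: show that $p^{n_0}_i=p_i$ for $i\le n_0$, deduce that the moments of $\mu^{n_0}$ eventually coincide with those of $\mu$, and invoke determinacy. The paper's proof simply asserts that weak convergence then follows, whereas you spell out the underlying compactness argument (Helly plus tightness and uniform integrability from the eventually constant even moments); this added detail is sound, and the only minor slack is that the moments actually agree up to order $2n_0$ rather than $n_0$, which you do not need anyway.
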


{\bf Proof}\quad It follows from equation \eqref{eq2} that the polynomials
$p^{n_0}_i(x)=p_i(x)$ for $0\le i\le n_0$ so that the moments of the original system $s_i$ and the perturbed system $s_i^{n_0}$ agree for
$0\le i\le 2n_0$.  The result now follows from the fact that the moment
problem is determinate.\eop

We now consider in more detail the properties of $\mu$. Set $\bbc^+=\{x\in\bbc: \Im x\ge 0\}$

\begin{thm}\label{thm1}
Suppose $\lim_{n\to\infty}a_n=\infty$, $\lim_{n\to\infty}\frac{a_n}{a_{n+1}}
=1$, $\lim_{n\to\infty}\left(\frac{b_n}{2a_{n+1}}\right)=d\ne \pm1$,
$\sum^\infty_{n=1}\left|\frac{a_n}{a_{n+1}}-\frac{a_{n+1}}{a_{n+2}}\right|
<\infty$, $\sum^\infty_{n=1}\left|\frac{1}{a_{n+1}}-\frac{1}{a_{n}}\right|
<\infty$, and $\sum^\infty_{n=0}\left|\frac{b_n}{a_{n+1}}-
\frac{b_{n+1}}{a_{n+2}}\right|<\infty$.  Then
\begin{equation}
\lim_{n\to\infty}
\frac{\phi_n(x)}{\prod^{n-1}_{i=1}t_{1,i}}=g(x).
\end{equation}
where the convergence is
uniform on compact subsets of $\bbc^+$. Thus $g$ is continuous for $\Im x\ge 0$ and analytic for $\Im x>0$. The same holds for $\lim_{n\to\infty}
\frac{\phi^1_n(x)}{\prod^{n-1}_{i=2}t_{1,i}}=g^1(x)$
\end{thm}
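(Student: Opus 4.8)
The plan is to rewrite the normalized quantity $\phi_n/\prod_{i=1}^{n-1}t_{1,i}$ via the inhomogeneous recurrence \eqref{recurphi} as a telescoping sum and then prove that sum converges uniformly on compact subsets of $\bbc^+$. Dividing \eqref{recurphi} by $\prod_{i=1}^{n}t_{1,i}$ gives
\begin{equation*}
\frac{\phi_{n+1}}{\prod_{i=1}^{n}t_{1,i}}-\frac{\phi_n}{\prod_{i=1}^{n-1}t_{1,i}}=\frac{(t_{2,n}-t_{2,n+1})p_n}{\prod_{i=1}^{n}t_{1,i}},
\end{equation*}
so that, using $\phi_0=1$ and $t_{1,0}=1$,
\begin{equation*}
\frac{\phi_{n+1}}{\prod_{i=1}^{n}t_{1,i}}=1+\sum_{k=1}^{n}\frac{(t_{2,k}-t_{2,k+1})\,p_k(x)}{\prod_{i=1}^{k}t_{1,i}}.
\end{equation*}
Thus the theorem reduces to showing the series $\sum_{k\ge1}(t_{2,k}-t_{2,k+1})p_k(x)/\prod_{i=1}^{k}t_{1,i}$ converges uniformly on compacta of $\bbc^+$, and similarly for the second-kind version with $\phi^1$, $p^1$.

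The first key step is to control the ratio $p_k(x)/\prod_{i=1}^{k}t_{1,i}$. Here I would compare the full recurrence with the ``frozen'' recurrence: the functions $t_{1,n},t_{2,n}$ are exactly the two solutions of the constant-coefficient relation obtained by freezing $a_n,a_{n+1},b_n$, since $t_{1,n}t_{2,n}=a_n/a_{n+1}$ and $t_{1,n}+t_{2,n}=(x-b_n)/a_{n+1}$. Writing $p_k/\prod_{i=1}^{k}t_{1,i}$ and $\phi_k/\prod_{i=1}^{k}t_{1,i}$ in terms of a discrete variation-of-parameters (Green's function) formula built from $t_{1,\cdot}$ and $t_{2,\cdot}$, one sees the increments are governed by $|t_{2,k}/t_{1,k}|=|\rho((x-b_k)/2\sqrt{a_ka_{k+1}})|^{-2}$, which tends to $|\rho(d)|^{-2}<1$ for $x$ in a compact set (using $b_k/2a_{k+1}\to d\ne\pm1$, $a_k/a_{k+1}\to1$, and $a_k\to\infty$ so the $x$-dependence washes out). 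So $p_k/\prod_{i=1}^{k}t_{1,i}$ stays bounded, uniformly on compacta in $\bbc^+$, and $\phi_k/\prod_{i=1}^{k}t_{1,i}$ is a bounded sequence whose successive differences are summable once the summand coefficients are.

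The second key step is the estimate on the coefficient $t_{2,k}-t_{2,k+1}$. Expanding $\rho(z)^{-1}=z-\sqrt{z^2-1}$ for the small argument $z_k=(x-b_k)/2\sqrt{a_ka_{k+1}}\to 0$ (since $a_k\to\infty$) and using $\rho(z)^{-1}=z+O(z^2)$, one gets $t_{2,k}=\sqrt{a_k/a_{k+1}}\,\bigl(z_k+O(z_k^2)\bigr)$. The hypotheses are tailored precisely so that $\sum_k|t_{2,k}-t_{2,k+1}|<\infty$: the term $\sqrt{a_k/a_{k+1}}\,z_k$ is comparable to $(x-b_k)/a_{k+1}$, whose differences are summable by the assumptions $\sum|1/a_{k+1}-1/a_k|<\infty$ and $\sum|b_k/a_{k+1}-b_{k+1}/a_{k+2}|<\infty$, while the prefactor differences are handled by $\sum|a_k/a_{k+1}-a_{k+1}/a_{k+2}|<\infty$, and the $O(z_k^2)$ remainder is absolutely summable because $z_k\to0$ with $1/a_k$-type decay. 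Hence the series is dominated by an $x$-independent (on compacta) convergent series, giving uniform convergence; the limits $g,g^1$ are then continuous on $\Im x\ge0$ and analytic on $\Im x>0$ as uniform limits of polynomials.

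The main obstacle I anticipate is the first step: making rigorous the claim that $p_k/\prod_{i=1}^k t_{1,i}$ is uniformly bounded on compact subsets of $\bbc^+$, including up to the real axis. Away from $[b_k-2a_k,b_k+2a_k]$ — which eventually contains any fixed compact set once $a_k$ is large — one has $|t_{2,k}/t_{1,k}|<1$ with a uniform gap, and a standard Gronwall/telescoping argument on the Green's-function representation closes the bound; but one must check the frozen-coefficient comparison is uniform in $x$ and that the $O(z_k^2)$ and ratio-variation error terms do not destroy the contraction. Handling the second-kind polynomials $p^1_k$ is entirely parallel, with the index shift in $\prod_{i=2}^{n-1}t_{1,i}$ reflecting $p^1_0=0$, $\deg p^1_k=k-1$.
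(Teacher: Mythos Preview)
Your overall architecture matches the paper's: telescope $\hat\phi_n$ via \eqref{recurphi}, bound $\hat p_k=p_k/\prod_{i=1}^k t_{1,i}$ by a Green's-function argument, and show $\sum_k|t_{2,k}-t_{2,k+1}|<\infty$. But two of your justifications are wrong and would not survive a careful write-up.

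\medskip
\textbf{Second step (summability of $t_{2,k}-t_{2,k+1}$).} You claim $z_k=(x-b_k)/2\sqrt{a_ka_{k+1}}\to 0$ and expand $\rho(z)^{-1}=z+O(z^2)$. Both are false. Since $b_k/2a_{k+1}\to d$ and $a_k/a_{k+1}\to 1$, one has $z_k\to -d$, not $0$ (only the $x$-part vanishes; the $b_k$-part survives). Moreover $\rho(0)^{-1}=-i$, so $\rho(z)^{-1}=z+O(z^2)$ is not even the right local expansion. The correct estimate is that $\rho^{-1}$ is analytic, hence Lipschitz, in a neighbourhood of $-d$ precisely because $d\ne\pm 1$; then $|t_{2,k}-t_{2,k+1}|\le C\big(|a_k/a_{k+1}-a_{k+1}/a_{k+2}|+|z_k-z_{k+1}|\big)$, and $|z_k-z_{k+1}|$ is controlled by the three summability hypotheses. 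This is what the paper encodes in its $\ep_k$.

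\medskip
\textbf{First step (boundedness of $\hat p_k$).} Your contraction claim $|t_{2,k}/t_{1,k}|\to|\rho(-d)|^{-2}<1$ is only valid when $|d|>1$. For $|d|<1$ and $x$ real (which is the case you must cover, since the theorem asserts uniform convergence on compacta of $\{\Im x\ge 0\}$), $|\rho(-d)|=1$ and there is \emph{no} uniform spectral gap: $|t_{2,k}/t_{1,k}|\to 1$. A naive Gronwall/contraction argument therefore does not close. The paper circumvents this by using only $|t_{2,j}|\le\sqrt{a_j/a_{j+1}}\le|t_{1,j}|$, which makes the products $R(k,m)=\prod t_{2,j-1}/t_{1,j}$ bounded (telescoping in the $a$'s) without any strict contraction, and then invokes the specific integral-equation identity for $\hat G(k,m)$ from \cite{vg} together with Picard iteration; the nondegeneracy needed there is $1-\rho(-d)^2\ne 0$, again exactly the hypothesis $d\ne\pm 1$. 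You correctly flagged this step as the main obstacle, but the mechanism you propose (a uniform gap) is the one that fails; the actual argument is the more delicate one the paper imports from \cite{vg}.
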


\begin{proof}
Let $K$ be a compact subset of $\bbc^+$.  It follows
from the first three hypotheses that for every $x$ in $K$ there is
an $k_0$ such that for $k\ge k_0$,
\begin{equation}\label{eq3}
\frac{a_k}{a_{k+1}}\sqrt{\left(\frac{x-b_{k}}{2a_{k+1}}\right)^2-\frac{a_k}{a_{k+1}}}\ne 0
\ne\frac{a_k}{a_{k+1}}\sqrt{\left(\frac{x-b_{k-1}}{2a_{k-1}}\right)^2-\frac{a_k}{a_{k-1}}}
+\sqrt{\left(\frac{x-b_{k+1}}{2a_{k+2}}\right)^2-\frac{a_{k+1}}{a_{k+2}}}
\end{equation}
Consequently
$$\left|t_{1,k+1}-\frac{a_k}{a_{k+1}}\, t^{-1}_{2,k-1}\right|\le O
(\ep_k+\ep_{k-1}+\ep_{k-2})$$
where
$$\ep_i=\left|\frac{1}{a_{i+1}}-\frac{1}{a_{i+2}}\right|+\left|\frac{a_i}{a_{i+1}}-\frac{a_{i+1}}{a_{i+2}}\right| +
\left|\frac{b_i}{a_{i+1}}-\frac{b_{i+1}}{a_{i+2}}\right|,$$
for $i>0$.

Let
\begin{equation}\label{eq4}
G(k,m)=\begin{cases}0 & k\ge m\\
\sum^m_{i=k+1}\left(\prod^m_{j=i+1} t_{1,j}\right)\left(
\prod^{i-1}_{j=k+1}t_{2,j}\right) & k<m,\end{cases}\end{equation}
and $\hat G(k,m)=\prod^m_{i=k}t^{-1}_{1,i}G(k,m)$ for $k>0$.  Then from
\cite[p.~228]{vg}.
\begin{align}
&\left(\frac{a_k}{a_{k+1}}-\frac{a_{k+1}}{a_{k+2}}\frac{t_{1,k}}{t_{2,k+1}}
\right)\hat G(k,m)=-1+\frac{a_{k-1}}{a_k}\frac{1}{t_{1,m-1}t_{1,m}} R(k,m)
\nonumber\\
&\qquad + \sum^{m-1}_{i=k+1} t_{2,i-1}\left\{t_{1,i+1}-
\frac{a_i}{a_{i+1}}\, t^{-1}_{2,i-1}\right\}
R(k,i)\hat G(i,m)\end{align}
where $R(k,m)=\prod^{m-1}_{j=k+1}t_{2,j-1}/t_{1,j}$. Since
$\sqrt{\frac{a_{j+1}}{a_j}}|t_{2,j}|\le 1\le\sqrt{\frac{a_{j+1}}{a_j}}|t_{1,j}|$ and
$$\lim_{k\to\infty}\left(\frac{a_k}{a_{k+1}}-\frac{a_{k+1}}{a_{k+2}}
\frac{t_{1,k}}{t_{2,k+1}}\right)=1-\rho(-d)^2$$ we
see that for all $x\in K$ there is a constant $c$ and a $k_0$ such that
$|R(k,m)|<c$ for $k\ge k_0$.
Also for $k_0$ large enough there is an $r>0$ such that
$\left|\frac{a_k}{a_{k+1}}-\frac{a_{k+1}}{a_{k+2}}\frac{t_{1,k}}{t_{2,k+1}}
\right|>r$ for all $k\ge k_0$.  Thus Picard iteration gives
$|\hat G(k,m)|\le c_1\exp\left\{D\sum^m_{i=k}\ep_i\right\}$ for $k\ge k_0$.

Since $G(k,m)$ satisfies the relation
$$
\frac{a_{k+1}}{a_{k+2}}
G(k+1,m)+G(k-1,m)-(t_{1,k+1}+t_{2,k})G(k,m)=\delta_{k,m}
$$
\cite[eq.~(3.2)]{vg}
after multiplication by $\prod_{i=k}^m t^{-1}_{1,k}$ this relation allows
the computation of $\hat G(i,m), i=-1,0$ so the above bound may be extended
by  induction to $-1\le k< k_0$.     

The polynomials $\{p_n(x)\}^\infty_{m=0}$ satisfy the relation
\cite[eq.~(3.5)]{vg}
\begin{equation}\label{eq7}
\hat p_m(x)=\hat G(-1,m)+\sum^{m-1}_{k=0}t_{1,k} (t_{1,k}-t_{1,k+1})\hat
p_k(x)\hat G(k,m),
\end{equation}
where $\hat\phi _n(z)=\left(\prod^{n-1}_{i=1} t^{-1}_{1,i}\right)\phi _n(z)$. Using the above bound on $G(k,m)$ $k=-1,\dots, m-1$ and Picard
iteration yields (\cite[eq.~(3.10)]{vg})
\begin{equation}\label{eq8}
|\hat p_m(x)|\le A\exp\left\{B\sum^m_{k=1}|t_{1,k}-t_{1,k+1}|\right\}
\le A\exp\left\{ \tilde B\sum^m_{k=1} \ep_k\right\},\end{equation}
where the constant $A$, $B$ and $\tilde B$ may be chosen uniformly for
$x\in K$.
Finally from equation~\eqref{recurphi} we find
\begin{equation}\label{hatphone}
\hat\phi_{n+1}(z)-\hat \phi _n(z)=(t_{2,n}-t_{2,n+1})\hat p_n(z),
\end{equation}
where $\hat\phi _n(z)=\left(\prod^{n-1}_{i=1} t^{-1}_{1,i}\right)\phi _n(z)$.
Thus $\hat\phi_n(z)-\hat\phi_m(z)=\sum^{n-1}_{i=m}(t_{2,i}
-t_{2,i+1})\hat p_j(z)$.  The bound \eqref{eq8} can now be used to obtain 
\begin{equation}\label{hatp}
|\hat\phi_n(z)-\hat\phi_m(z)|\le O\left(\sum^n_{i=m}\ep_i\right).
\end{equation}
Thus $\{\hat\phi_n(z)\}$ is a Cauchy sequence in every compact subset
$K\subset\bbc^+$ which gives the uniform convergence. Since each $t_{i,n},\
i=1,2\ n=1,2..$ is continuous for $\Im x\ge0$ and analytic for $\Im x>0$
the continuity and analyticity properties of $g$ follow. An analogous
argument gives the result for $\hat\phi^1_n$.
\end{proof}

\begin{lem}\label{lem2} Suppose the hypothesis of  {\rm Theorem~\ref{thm1}}
  hold with $|d|>1$. Then for each compact set $K\subset\bbc$ there exits
  and $N$ such that $\prod_{i=1}^N t_{1,i} g(x)$ is analytic for $x\in
  K$. The same is true for $\prod_{i=2}^N t_{1,i} g^1(x)$.
\end{lem}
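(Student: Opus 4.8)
The plan is to re-run the proof of Theorem~\ref{thm1} with every product and sum started at a large index $N=N(K)$ rather than at $1$; the hypothesis $|d|>1$ is exactly what makes the relevant functions analytic on $K$ for indices $\ge N$. First I would enlarge the given compact set to a bounded open set, still written $K$, and note: $d$ is real (the limit of the real ratios $b_n/(2a_{n+1})$), so, since $a_n\to\infty$, $a_n/a_{n+1}\to1$ and $b_n/(2a_{n+1})\to d$, the function $x\mapsto\frac{x-b_n}{2\sqrt{a_na_{n+1}}}$ converges, uniformly for $x\in K$, to the constant $-d$, which lies strictly outside $[-1,1]$ where $\rho$ is analytic. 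Hence there exist $N$ and $\delta>0$ so that for all $n\ge N$ the functions $t_{1,n},t_{2,n}$ are analytic and nonvanishing on $K$, with $|t_{1,n}(x)|-|t_{2,n}(x)|\ge\delta$ for $x\in K$. In particular $\phi_n=p_n-t_{2,n}p_{n-1}$ is analytic on $K$ for $n\ge N$, and for $n\ge N+1$ so is
\[
\psi_n:=\Big(\prod_{i=N+1}^{n-1}t_{1,i}^{-1}\Big)\phi_n=\Big(\prod_{i=1}^{N}t_{1,i}\Big)\hat\phi_n,
\]
since the finitely many factors $t_{1,i}^{-1}$ with $i>N$ are analytic and nonzero on $K$.

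Next I would prove that $\{\psi_n\}$ converges uniformly on $K$. Put $q_n:=\big(\prod_{i=N+1}^{n}t_{1,i}^{-1}\big)p_n=\big(\prod_{i=1}^{N}t_{1,i}\big)\hat p_n$, which is analytic on $K$ for $n\ge N+1$ ($\hat p_n$ being as in the proof of Theorem~\ref{thm1}). The recurrence \eqref{recurphi} gives, exactly as \eqref{hatphone} was obtained,
\[
\psi_{n+1}-\psi_n=(t_{2,n}-t_{2,n+1})\,q_n .
\]
Now $\sum_n\sup_K|t_{2,n}-t_{2,n+1}|<\infty$ by the same estimate as in the proof of Theorem~\ref{thm1}: for $n\ge N$, $t_{1,n}$ and $t_{2,n}$ are the two roots of $T^2-\frac{x-b_n}{a_{n+1}}T+\frac{a_n}{a_{n+1}}=0$, separated by at least $\delta$ on $K$, while the coefficients of that quadratic change by $O(\ep_n)$ uniformly on $K$. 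So everything comes down to a uniform bound $\sup_K|q_n|\le C$. For this I would not rerun the Picard iteration of \cite{vg}: on $K\cap\{\Im x>0\}$ I use \eqref{eq8}, $|\hat p_n|\le A\exp\{\tilde B\sum_k\ep_k\}$, together with the boundedness of the fixed product $\prod_{i=1}^{N}t_{1,i}$ on $K$; on $K\cap\{\Im x<0\}$ I transfer this bound through the reflection $q_n(\bar x)=\overline{q_n(x)}$ (the recurrence coefficients are real, $p_n(\bar x)=\overline{p_n(x)}$, and for $i>N$ the $\rho$-argument inside $t_{1,i}$ stays near the real point $-d$, so $t_{1,i}(\bar x)=\overline{t_{1,i}(x)}$), applying \eqref{eq8} on the compact subset of $\bbc^+$ obtained by replacing each $x\in K$ with $\Im x<0$ by $\bar x$; and on $K\cap\bbr$ the bound follows by continuity of $q_n$. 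Hence $\sum_n\sup_K|\psi_{n+1}-\psi_n|\le C\sum_n\sup_K|t_{2,n}-t_{2,n+1}|<\infty$, so $\{\psi_n\}$ is uniformly Cauchy on $K$ and its limit $\Psi$ is analytic there. On $K\cap\{\Im x\ge0\}$, Theorem~\ref{thm1} gives $\psi_n\to\big(\prod_{i=1}^{N}t_{1,i}\big)g$, so $\Psi=\big(\prod_{i=1}^{N}t_{1,i}\big)g$ on that set and therefore $\big(\prod_{i=1}^{N}t_{1,i}\big)g$ extends analytically to all of $K$ (if $K\subset\{\Im x<0\}$ this extension is what $\big(\prod_{i=1}^{N}t_{1,i}\big)g$ on $K$ means). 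The assertion for $g^1$ follows identically, with $\hat\phi^1_n=\big(\prod_{i=2}^{n-1}t_{1,i}^{-1}\big)\phi^1_n$, so that the relevant product starts at $i=2$.

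The one genuinely delicate point is the uniform bound $\sup_K|q_n|\le C$ on the full compact $K\subset\bbc$: the growth estimate \eqref{eq8} taken over from \cite{vg} lives on $\bbc^+$, and although the individual functions $t_{1,i}$ with $i\le N$ carry branch cuts along real intervals that may meet $K$, the combination $q_n$ is globally analytic on $K$, so one must either use the reflection symmetry carefully as above or redo the Picard iteration of \cite{vg} from index $N$ with $p_N,p_{N-1}$ as initial data. Everything else is routine rebookkeeping of the local estimates in the proof of Theorem~\ref{thm1} (the bound on $R(k,m)$, the lower bound coming from $1-\rho(-d)^2\ne0$, and the bounds on $t_{2,n}-t_{2,n+1}$), with all indices shifted to start at $N$.
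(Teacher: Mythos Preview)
Your argument is correct and follows the same idea as the paper: the key observation is that for $|d|>1$ the arguments of $\rho$ in $t_{i,n}$ tend uniformly on $K$ to $-d\notin[-1,1]$, so $t_{i,n}$ is analytic on all of $K$ for $n\ge N$, after which the convergence proof of Theorem~\ref{thm1} goes through on $K$. The paper compresses this second step into the single phrase ``the result follows from Theorem~\ref{thm1},'' whereas you actually carry it out, and your reflection trick $q_n(\bar x)=\overline{q_n(x)}$ is a pleasant shortcut that spares you from redoing the Picard iteration of \cite{vg} on the enlarged domain.
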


\begin{proof}
If $d>1$ then for each compact set $K\subset\bbc$ there exists
an $N$ such for every $x\in K,\ \Re(x)<b_N-2 a_N$. Thus for $n\ge N,\
t_{i,n}$ are analytic for $x\in K$ so the result follows from
Theorem~\ref{thm1}. A similar argument can be used for $d<-1$. The result
for  $\prod_{i=2}^N t_{1,i} g^1(x)$ follows as above.
\end{proof}

\begin{thm}\label{thm2}
Suppose the hypotheses of {\rm Theorem~\ref{thm1}} hold and $d\ne\pm1$.
Suppose that the moment problem associated with the
recurrence coefficients $\{a_{n+1},b_{n}\}^{\infty}_{n=0}$ is determinate. If $|d|<1$ then
$\mu(x)$ is absolutely continuous and
$$\mu' (x)=\frac{1}{a_1\pi}\frac{\sqrt{1-d^2}}{|g(x)|^2\prod^\infty_{n=1}|\tilde t_{1,n}|^2}
\, .\ x\in\bbr$$
If $|d|>1$ then $\mu$ is purely discrete with,
$$
d\mu=\sum_{i=0}^{\infty} \mu_i\delta(x-x_i)dx,
$$
where
$$
\mu_i=\frac{g^1(x_i)}{t_{1,1}(x_i)\frac {dg(x_i)}{dx}},
$$
and $x_i$ are the real zeros of $g(x)$.
Here $\tilde t_{1,n}=\rho\left(\frac{x-b_n}{2\sqrt{a_na_{n+1}}}\right)$ and
$t_{1,1}(x_i)=\lim_{y\to0}t_{1,1}(x_i+iy)$.
\end{thm}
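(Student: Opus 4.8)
The plan is to transfer the explicit description of $\mu^{n_0}$ through the weak convergence $\mu^{n_0}\to\mu$ of Lemma~\ref{lem1}, using Theorem~\ref{thm1} (and Lemma~\ref{lem2}) to control the normalized polynomials. The first step is to compute the limits of the diagonal quantities $\phi^{n_0}_{n_0}$ and $\phi^{n_0,1}_{n_0}$. Since the truncated coefficients agree with the original ones for indices $<n_0$, one has $p^{n_0}_i=p_i$ and $p^{n_0,1}_i=p^1_i$ for $i\le n_0$, so $\phi^{n_0}_{n_0}$ is obtained from $\phi_{n_0}$ by replacing $t_{2,n_0}$ with $\rho(\frac{x-b_{n_0}}{2a_{n_0}})^{-1}$. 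After dividing by $\prod_{i=1}^{n_0-1}t_{1,i}$ the difference becomes $(t_{2,n_0}-\rho(\frac{x-b_{n_0}}{2a_{n_0}})^{-1})\,\hat p_{n_0-1}/t_{1,n_0-1}$, which tends to $0$ uniformly on compact subsets of $\bbc^+$: $|\hat p_{n_0-1}|$ is bounded by~\eqref{eq8}, $|t_{1,n_0-1}|\ge\sqrt{a_{n_0-1}/a_{n_0}}$ is bounded below, and both arguments $\frac{x-b_{n_0}}{2\sqrt{a_{n_0}a_{n_0+1}}}$ and $\frac{x-b_{n_0}}{2a_{n_0}}$ converge to $-d\ne\pm1$, where $\rho^{-1}$ is continuous. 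Hence $\phi^{n_0}_{n_0}/\prod_{i=1}^{n_0-1}t_{1,i}\to g$ and $\phi^{n_0,1}_{n_0}/\prod_{i=2}^{n_0-1}t_{1,i}\to g^1$ uniformly on compacta of $\bbc^+$ (the estimates of the proof of Theorem~\ref{thm1} are uniform in $n_0$, since the quantities $\ep_i$ for the $n_0$-truncated system equal the original ones for $i<n_0$ and vanish for $i\ge n_0$). When $|d|>1$, multiplying by a fixed $\prod_{i=1}^{N}t_{1,i}$ as in Lemma~\ref{lem2}, so the remaining $t_{i,n}$, $n\ge N$, are analytic and nonzero near any prescribed compact $K\subset\bbc$, these estimates persist on $K$ and give $C^0$ --- hence, by Cauchy's estimates, $C^1$ --- convergence there.

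Suppose $|d|<1$. Since $b_{n_0}/(2a_{n_0})\to d\in(-1,1)$, the interval $[b_{n_0}-2a_{n_0},\,b_{n_0}+2a_{n_0}]$ eventually contains any compact $K\subset\bbr$, on which $\mu^{n_0}$ is purely absolutely continuous. Writing $|t_{1,i}|^2=\frac{a_i}{a_{i+1}}|\tilde t_{1,i}|^2$ and $\prod_{i=1}^{n_0-1}|t_{1,i}|^2=\frac{a_1}{a_{n_0}}\prod_{i=1}^{n_0-1}|\tilde t_{1,i}|^2$, the unbounded factor $a_{n_0}$ cancels and the density of $\mu^{n_0}$ on $K$ equals $\dfrac{\sqrt{1-(\frac{x-b_{n_0}}{2a_{n_0}})^2}}{a_1\pi\,|\hat\phi^{n_0}_{n_0}(x)|^2\prod_{i=1}^{n_0-1}|\tilde t_{1,i}(x)|^2}$, where $\hat\phi^{n_0}_{n_0}:=\phi^{n_0}_{n_0}/\prod_{i=1}^{n_0-1}t_{1,i}$. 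Letting $n_0\to\infty$ (the first step, together with $\frac{x-b_{n_0}}{2a_{n_0}}\to-d$ and the fact that $|\tilde t_{1,n}(x)|=1$ for real $x$ once $n$ is large, so the product stabilizes) this converges, uniformly on compact subsets of $\{x\in\bbr:g(x)\ne0\}$, to $\dfrac{\sqrt{1-d^2}}{a_1\pi\,|g(x)|^2\prod_{n\ge1}|\tilde t_{1,n}(x)|^2}$. The resulting uniform upper bound furnishes the domination needed to pass to the limit in $\mu^{n_0}(f)\to\mu(f)$ for $f$ continuous with support in $\{g\ne0\}\cap\bbr$, identifying $\mu$ there with the claimed absolutely continuous measure.

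Suppose $|d|>1$, say $d>1$ (the case $d<-1$ is symmetric). Then $b_{n_0}-2a_{n_0}\to+\infty$, so for any compact $K\subset\bbr$ and $n_0$ large the absolutely continuous part of $\mu^{n_0}$ misses $K$, and $\mu^{n_0}|_K=\sum_i\mu^{n_0}_i\delta_{x^{n_0}_i}$ is carried by the real, simple zeros $x^{n_0}_i$ of $\phi^{n_0}_{n_0}$ in $K$, with $\mu^{n_0}_i=\phi^{n_0,1}_{n_0}(x^{n_0}_i)/\frac{d}{dx}\phi^{n_0}_{n_0}(x^{n_0}_i)$. Fix $N$ as in Lemma~\ref{lem2}, so $G_N:=(\prod_{i=1}^N t_{1,i})\,g$ is analytic near $K$ and $(\prod_{i=1}^N t_{1,i})\,\hat\phi^{n_0}_{n_0}\to G_N$ in $C^1$ there; as the $t_{1,i}$ are zero-free, the zeros of this function are exactly those of $\phi^{n_0}_{n_0}$. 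By Hurwitz's theorem they converge to the zeros of $g$ in $K$, none created or lost, and, since the $x$-dependent normalizing product contributes nothing to the derivative at a zero, $\mu^{n_0}_i=\dfrac{\hat\phi^{n_0,1}_{n_0}(x^{n_0}_i)}{t_{1,1}(x^{n_0}_i)\,\frac{d}{dx}\hat\phi^{n_0}_{n_0}(x^{n_0}_i)}\to\dfrac{g^1(x_i)}{t_{1,1}(x_i)\,g'(x_i)}$. Weak convergence then gives $\mu|_K=\sum_i\mu_i\delta_{x_i}$; as $K$ is arbitrary and mass can leave a fixed compact only through $+\infty$, $\mu$ is purely discrete with the stated point masses.

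The step needing the most care is completing the $|d|<1$ case: one must rule out a singular part of $\mu$, equivalently show $g$ has no real zeros (this is where the summability hypotheses do real work). I would use the Stieltjes transform: $m^{n_0}(z):=\int\frac{d\mu^{n_0}(x)}{x-z}\to m(z):=\int\frac{d\mu(x)}{x-z}$ for $z\notin\bbr$, and the explicit $m^{n_0}=-\phi^{n_0,1}_{n_0}/\phi^{n_0}_{n_0}$ together with the first step give $m(z)=-g^1(z)/(t_{1,1}(z)\,g(z))$ in the open upper half-plane. The identity ``a.c.\ density $=\tfrac1\pi\Im m^{n_0}(x+i0)$'', after clearing $|\phi^{n_0}_{n_0}|^2$, reads $\Im\!\bigl(-\phi^{n_0,1}_{n_0}\overline{\phi^{n_0}_{n_0}}\bigr)=\tfrac{1}{2a_{n_0}^2}\sqrt{4a_{n_0}^2-(x-b_{n_0})^2}$ on the band; normalizing and letting $n_0\to\infty$ turns this into $\Im\!\bigl(-\overline{t_{1,1}(x)}\,g^1(x)\overline{g(x)}\bigr)=\frac{\sqrt{1-d^2}}{a_2\prod_{i\ge2}|\tilde t_{1,i}(x)|^2}$, which is continuous and strictly positive on $\bbr$. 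Hence $g(x)\ne0$ for every real $x$, so $m$ extends continuously to $\bbr$ with finite boundary values; by Stieltjes inversion $\mu$ has no atoms (as $\Im m$ is locally bounded) and no singular continuous part (as $\Im m(\cdot+i0)$ is a continuous function), so $\mu$ is purely absolutely continuous with the stated derivative. One also needs the soft fact that the weak limit of the probability measures $\mu^{n_0}$ being a probability measure makes $\{\mu^{n_0}\}$ tight, so the escaping point masses satisfy $\sum_i\mu^{n_0}_i\to0$. In the $|d|>1$ case the parallel delicate point --- simplicity of the real zeros of $g$ and $g^1(x_i)\ne0$ --- follows likewise by passing to the limit in the Wronskian relation between $\phi^{n_0}_{n_0}$ and $\phi^{n_0,1}_{n_0}$, the identity underlying positivity of the $\mu^{n_0}_i$ and simplicity of the zeros of $\phi^{n_0}_{n_0}$.
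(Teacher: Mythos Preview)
Your approach is essentially the paper's: identify $\hat\phi^{n_0}_{n_0}$ with $\hat\phi_{n_0}$ up to a vanishing correction, invoke Theorem~\ref{thm1} and Lemma~\ref{lem2} to pass to the limit in the explicit formula for $\mu^{n_0}$, and use Lemma~\ref{lem1} for the weak convergence; for $|d|>1$ the paper cites Rouche where you cite Hurwitz, with the same computation of the limiting point masses. Your proposal goes further than the paper in one respect: you explicitly argue that $g$ has no real zeros when $|d|<1$ (equivalently, that $\mu$ has no singular part) via the Wronskian/Stieltjes-transform identity, and you handle the escaping point masses by tightness --- the paper's proof simply records the pointwise limit of the densities and stops, so your argument is more complete on exactly the issues you flagged as needing the most care.
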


\begin{proof}
From the definition of $\phi_{n_0}^{n_0}$
we see that $\phi_{n_0}=\phi
^{n_0}_{n_0}+(\rho(\frac{x-b_{n_0}}{2a_{n_0}})^{-1}-t_{2,n_0})p_{n_0-1}$. Thus
Theorem 1 implies that $\hat\phi_{n_0}^{n_0}=\prod_{i=1}^{n_0-1} t_{1,i}\phi_{n_0}^{n_0}$ converges uniformly on compact subsets of $\bbc^+$ to $g$. For $|d|<1$ it follows that for each compact
subset $K\subset R$ there exists an $N$, such that for all $n>N$,
$|\tilde t_{1,n}|=1$.
Since for large enough $n_0$, $K\subset [b(n_0)-2a(n_0),\ b(n_0)+
2a(n_0)]$ we see that if $n_0$ is also greater than $N$
\begin{align*}
d\mu^{n_0}_{ac}(x)&=\frac{
\sqrt{1-\left(\frac{x-b_{n_0}}{2a_{n_0}}\right)^2}}
{|\hat \phi^{n_0}_{n_0}(x)|^2 a_{n_0}\pi\prod^{n_0-1}_{i=1}
|\tilde t_{1,i}|^2\frac{a_1}{a_{n_0}}}\, dx\\
&=\frac{1}{\pi a_1}\frac{\sqrt{1-\left(\frac{x-b_{n_0}}{2a_{n_0}}\right)^2}}
{|\hat \phi^{n_0}_{n_0}(x)|^2\prod^{n_0-1}_{i=1}
|\tilde t_{1,i}|^2}\, dx.\end{align*}
Now from Theorem~(\ref{thm1}) and Lemma~(\ref{lem1}b)
$$\lim_{n_0\to\infty}\frac{1}{\pi a_1}\frac{
\sqrt{1-\left(\frac{x-b_{n_0}}{2a_{n_0}}\right)^2}}
{|\hat\phi^{n_0}_{n_0}(x)|^2\prod^{n_0-1}_{i=1}|\tilde t_{1,i}|^2}=
\frac{1}{\pi a_1}\frac{\sqrt{1-d^2}}{g(x)\prod^\infty_{i=1}
|\tilde t_{1,i}|^2}$$
where the convergence is uniform on compact subsets of $R$.  Note
that $\prod^\infty_{i=1}|\tilde t_{1,i}|^2=\prod^N_{i=1}
|\tilde t_{1,i}|^2$ for $x\in K$ which gives the result for $|d|<1$. We now
consider $|d|>1$. Since the zeros of $\phi_{n_0}^{n_0}$ are real and simple
it follows from  Theorem~\ref{thm1},  Lemma~\ref{lem2}, and Rouche's theorem
that the zeros of $g$ are real and simple. The weak convergence of
$\mu^{n_0}$ shows that $\mu$ is purely discrete and  from Theorem~\ref{thm1} and Lemma~\ref{lem2} we find 
$$
\lim_{n_0\to\infty}\mu^{n_0}_i=\lim_{n_0\to\infty}\frac{\phi_{n_0}^{1,n_0}(x^{n_0}_i)}{\frac{d\phi_{n_0}^{n_0}(x^{n_0}_i)}{dx}}=\lim_{n_0\to\infty}\frac{\hat\phi^{1,n_0}_{n_0}(x^{n_0}_i)}{t_{1,1}(x^{n_0}_i)\frac{d\hat\phi^{n_0}_{n_0}(x^{n_0}_i)}{d x}}= \frac{g^1(x_i)}{t_{1,1}(x_i)\frac{d g(x_i)}{ dx}},
$$
which gives  the result.
\end{proof}

The above results allow us to obtain  asymptotics for the orthogonal
polynomials in an analogous manner to some of those obtained in
M\'at\'e-Nevai-Totik \cite{mnt}.
\begin{thm}\label{asymone}
Suppose the hypotheses of Theorem~\ref{thm1} hold, If $|d|<1$ and $K$ is a
compact subset of the real line then there exist an $N$ so that
\begin{align}\label{asymdgt1rl}
\sqrt{a_{n+1}}\sqrt{\big(1-(\frac{x-b_{n+1}}{2\sqrt{a_{n+2}a_{n+1}}})^2\big)\mu'(x)}p_n(x)&=\sqrt{\frac{\sqrt{1-d^2}}{\pi}}\sin\big(\sum_{k=1}^{n}\arg{t_{1,k}+\arg{g(x)}}\big)\\&+O(\sum_{k=n+1}^{\infty} \ep_k)
\end{align}
for all $n\ge N$ uniformly for $x\in K$. If $|d|>1$ or $|d|<1$ and $K$ is a compact subset of $\Im x>0$ then
\begin{equation}\label{asydlto}
 \frac{(p_n(x)-t_{2,n}p_{n-1}(x))}{\prod_{i=1}^{n-1}t_{1,i}}=g(x)+O(\sum_{k=n}^{\infty} \ep_k),
\end{equation}
uniformly for $x\in K$. For $|d|>1$ this is also true for $K\subset\bbc\setminus\text{supp}(\mu)$
\end{thm}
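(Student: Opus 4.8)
\quad I would establish the two displays separately, starting with the simpler \eqref{asydlto}, which is really just a quantitative form of Theorem~\ref{thm1}. The estimate \eqref{hatp} obtained in the proof of that theorem, $|\hat\phi_n(z)-\hat\phi_m(z)|\le O\bigl(\sum_{i=m}^{n}\ep_i\bigr)$ uniformly on compacta of $\bbc^+$, already contains the rate: letting $n\to\infty$ with $m$ fixed gives $\hat\phi_m(z)=g(z)+O\bigl(\sum_{k\ge m}\ep_k\bigr)$, and since $\hat\phi_m=\phi_m/\prod_{i=1}^{m-1}t_{1,i}$ with $\phi_m=p_m-t_{2,m}p_{m-1}$ this is exactly \eqref{asydlto}; the analogous estimate for $\hat\phi^1_m$ gives the $g^1$ version. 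For $|d|<1$ this applies to compacta of $\{\Im x>0\}$, and, verbatim, to real compacta as well. For $|d|>1$ I would extend it to a compact $K\subset\bbc\setminus\operatorname{supp}(\mu)$: by the argument of Lemma~\ref{lem2} there is an $N=N(K)$ so that the intervals $[b_n-2a_n,b_n+2a_n]$, $n\ge N$, miss $K$, hence $t_{1,n},t_{2,n}$ are analytic on $K$ for $n\ge N$; one then reruns the chain of estimates in the proof of Theorem~\ref{thm1} — the Picard bound on $\hat G$, the bound \eqref{eq8} on $\hat p_m$, and \eqref{hatp} — on $K$. This is legitimate because those estimates used only $|d|\ne1$ (which keeps $\lim_k(\tfrac{a_k}{a_{k+1}}-\tfrac{a_{k+1}}{a_{k+2}}\tfrac{t_{1,k}}{t_{2,k+1}})=1-\rho(-d)^2$ nonzero), the inequalities $\sqrt{a_{j+1}/a_j}\,|t_{2,j}|\le1\le\sqrt{a_{j+1}/a_j}\,|t_{1,j}|$ (valid off the cuts), and the non-vanishing \eqref{eq3}; for the part of $K$ in the lower half plane one uses $\hat\phi_n(\bar x)=\overline{\hat\phi_n(x)}$, and Lemma~\ref{lem2} guarantees the limits $\prod_{i=1}^{N}t_{1,i}\,g$ and $\prod_{i=2}^{N}t_{1,i}\,g^1$ are genuinely analytic on $K$.

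For \eqref{asymdgt1rl} the task is to invert the substitution $\phi_n=p_n-t_{2,n}p_{n-1}$ on the absolutely continuous spectrum. Since $\lim_n\frac{x-b_n}{2\sqrt{a_na_{n+1}}}=-d$ uniformly for $x$ in a fixed real compact $K$ and $|d|<1$, there is an $N$ with $\bigl|\frac{x-b_n}{2\sqrt{a_na_{n+1}}}\bigr|<1$ for $n\ge N$, $x\in K$; hence $\tilde t_{1,n}$ has modulus one, $|\tilde t_{1,n}|=1$, and $t_{2,n}=(a_n/a_{n+1})/t_{1,n}=\overline{t_{1,n}}$. Because $p_{n+1}$ and $p_n$ are real on $K$, taking imaginary parts in $\phi_{n+1}=p_{n+1}-t_{2,n+1}p_n$ gives, for $n\ge N$,
\[
p_n(x)=\frac{\Im\phi_{n+1}(x)}{\Im t_{1,n+1}(x)},\qquad
\Im t_{1,n+1}(x)=\sqrt{\tfrac{a_{n+1}}{a_{n+2}}}\sqrt{1-\Bigl(\tfrac{x-b_{n+1}}{2\sqrt{a_{n+1}a_{n+2}}}\Bigr)^{2}},
\]
so that the left side of \eqref{asymdgt1rl} equals $\sqrt{a_{n+2}}\,\sqrt{\mu'(x)}\;\Im\phi_{n+1}(x)$ (the factor $\sqrt{1-(\cdots)^2}$ cancels against $\Im t_{1,n+1}$ and $\sqrt{a_{n+1}}\sqrt{a_{n+2}/a_{n+1}}=\sqrt{a_{n+2}}$). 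I would now insert $\phi_{n+1}=\bigl(\prod_{i=1}^{n}t_{1,i}\bigr)\hat\phi_{n+1}=\sqrt{a_1/a_{n+1}}\,\bigl(\prod_{i=1}^{n}\tilde t_{1,i}\bigr)\hat\phi_{n+1}$, factor $\prod_{i=1}^{n}\tilde t_{1,i}=\bigl(\prod_{i=1}^{\infty}|\tilde t_{1,i}|^{2}\bigr)^{1/2}\exp\bigl(i\sum_{k=1}^{n}\arg t_{1,k}\bigr)$ — valid on $K$ since $|\tilde t_{1,i}|=1$ for $i\ge N$ makes the infinite product equal to the finite one $\prod_{i=1}^{N-1}|\tilde t_{1,i}|^{2}$ — and use $\hat\phi_{n+1}=g+O\bigl(\sum_{k\ge n+1}\ep_k\bigr)$ (which, as above, follows from \eqref{hatp} uniformly on real compacta) together with $\mu'(x)=\sqrt{1-d^2}\big/\bigl(a_1\pi|g(x)|^{2}\prod_{i=1}^{\infty}|\tilde t_{1,i}|^{2}\bigr)$ from Theorem~\ref{thm2}.

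Carrying out the algebra, $\Im\phi_{n+1}=\sqrt{a_1/a_{n+1}}\,\bigl(\prod|\tilde t_{1,i}|^2\bigr)^{1/2}\bigl[\,|g|\sin\bigl(\sum_{k=1}^n\arg t_{1,k}+\arg g\bigr)+O\bigl(\sum_{k\ge n+1}\ep_k\bigr)\bigr]$, and multiplying by $\sqrt{a_{n+2}}\,\sqrt{\mu'(x)}$ the factors $\bigl(\prod|\tilde t_{1,i}|^2\bigr)^{1/2}$ cancel, $|g|$ cancels the $|g|^{-1}$ in $\sqrt{\mu'}$, and the powers of $a_1,a_{n+1},a_{n+2}$ collapse to $\sqrt{a_{n+2}/a_{n+1}}$, leaving $\sqrt{\sqrt{1-d^2}/\pi}\,\sqrt{a_{n+2}/a_{n+1}}\,\bigl[\sin(\cdots)+O(\sum_{k\ge n+1}\ep_k)/|g|\bigr]$. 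Since $|g|$ is bounded away from $0$ on $K$ the error is $O(\sum_{k\ge n+1}\ep_k)$; and since $\sum|\tfrac{a_n}{a_{n+1}}-\tfrac{a_{n+1}}{a_{n+2}}|<\infty$ with $a_n/a_{n+1}\to1$ one gets $|a_n/a_{n+1}-1|=O(\sum_{k\ge n}\ep_k)$ by telescoping, hence $\sqrt{a_{n+2}/a_{n+1}}=1+O(\sum_{k\ge n+1}\ep_k)$, which is absorbed into the error because $|\sin|\le1$. This is \eqref{asymdgt1rl}.

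The main obstacle is not any single step but the uniform control of the $O$-constants over $x\in K$, and the point where this breaks is at zeros of $g$, where $\mu'(x)$ blows up and the cancellation of $|g|$ above is illegitimate. I would therefore either state explicitly that $K$ avoids the zero set of $g$ on the real line (which, as for $\xi$ in the classical bounded theory, is at most a discrete set in the interior of the a.c.\ spectrum) or record that the implied constants depend on $\min_{x\in K}|g(x)|$. A smaller technical point is to fix one branch of $\rho$ on $[-1,1]$ — say the boundary value from $\{\Im x>0\}$, consistent with the convention used for $t_{1,1}(x_i)$ in Theorem~\ref{thm2} — so that $\arg t_{1,k}(x)$ and $\arg g(x)$ in the sine are honest continuous functions rather than ambiguous modulo $\pi$; with that convention one also checks the signs match, $\operatorname{sign}p_n(x)=\operatorname{sign}\sin\bigl(\sum_{k=1}^n\arg t_{1,k}+\arg g\bigr)$.
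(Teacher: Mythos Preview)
Your proposal is correct and follows essentially the same route as the paper: use the Cauchy estimate \eqref{hatp} from Theorem~\ref{thm1} (passing to the limit in $n$) to get \eqref{asydlto}, invoke Lemma~\ref{lem2} for the $|d|>1$ extension, and for \eqref{asymdgt1rl} take the imaginary part of $\phi_{n+1}=\bigl(\prod_{i=1}^{n}t_{1,i}\bigr)\hat\phi_{n+1}$ on the real line and then substitute the formula for $\mu'$ from Theorem~\ref{thm2}. The paper's proof is simply a terse version of what you wrote; in particular it does not isolate the leftover factor $\sqrt{a_{n+2}/a_{n+1}}$ or the dependence of the $O$-constant on $\min_K|g|$ that you flag, so your closing caveats are a genuine refinement rather than a divergence from the intended argument.
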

\begin{proof}
 Multiplication of equation~\eqref{eq1} by $\frac{1}{\prod_{i=1}^{n-1}t_{1,i}}$ then using equation~\eqref{hatp} and Theorem~\ref{thm1}
yields
\begin{equation}\label{asympt}
\frac{p_n(x)-t_{2,n}p_{n-1}(x)}{\prod_{i=1}^{n-1}t_{1,i}}=g(x)+O(\sum_{k=n}^{\infty} \ep_k).
\end{equation}
This gives the result for  $K\in\Im x>0$. For $|d|>1$ and $K\subset\bbc\setminus\text{supp}(\mu)$ the result follows from Lemma~\ref{lem2}.
If $K$ is a compact subset of the real line and $|d|<1$ then for large enough $n,\ K\subset[b_n-2a_n,b_n+2a_n]$ so taking the imaginary part of the above equation yields
\begin{align*}
\sqrt{\frac{a_n}{a_1}}\sqrt{1-(\frac{x-b_n}{2\sqrt{a_{n+1} a_n}})^2}p_{n-1}(x)&=|g(x)|\prod_{i=1}^{n-1}|\tilde t_{1,i}|\sin\big(\sum_{k=1}^{n-1}\arg{t_{1,k}+\arg{g(x)}}\big)\\&+O(\sum_{k=n}^{\infty} \ep_k).
\end{align*}
The use of Theorem~\ref{thm2} implies equation~\eqref{asymdgt1rl}. 
\end{proof}

\section{Acknowledgements} A.I. Aptekarev was supported by grant RScF-14-21-00025. He would also like to thank the School of Mathematics at GT for its hospitality (Fall 2004) when part of this work was completed.
J.S. Geronimo was partially supported by Simons Foundation Grant 210169. He would also like to thank the JLU-GT Joint Institute for Theoretical Sciences for their hospitiality where some of the work was carried out.


\begin{thebibliography}{xx}

\bibitem{AGVa} A.~I.~Aptekarev, J.~S.~Geronimo and W.~Van Assche, \emph{Varying weights for orthogonal polynomials with monotonically varying recurrence
coefficients}, J. Approx. Theory, \textbf{150}, (2008), 214--238.
\bibitem{dk}
P.~Deift and R.~ Killip, \emph{On the absolutely continuous spectrum of
one-dimensional {S}chr\"odinger operators with square summable potentials},
Comm. Math. Phys., \textbf{203},(1999), 341--347.
\bibitem{DM}
P. Deift and K. T-R McLaughlin,
{\it A Continuum Limit of the Toda Lattice},
Memoirs Amer.\ Math.\ Soc.\ \textbf{624}, Providence, RI,
1998.
\bibitem{den}
S. A.~Denisov, \emph{On a conjecture of Y. Last}, J. Approx. Theory, \textbf{158}, (2009), 194--213.
\bibitem{d}
J.~Dombrowski, \emph{Spectral measures corresponding to orthogonal polynomials with
              unbounded recurrence coefficients}, Constr. Approx., \textbf{5}, (1989), 371--381.
\bibitem{dp}
J.~Dombrowski and S.~Pedersen, \emph{Absolute continuity for unbounded Jacobi matrices with constant
   row sums}, J. Math. Anal. Appl., \textbf{267}, (2002), 695--713.
\bibitem{geca}
J. S.~Geronimo and K. M.~Case,\emph{Scattering theory and polynomials orthogonal on the real line}, Trans. Amer. Math. Soc.,\textbf{258},(1980),467--494.
\bibitem{gi} J. S.~Geronimo, and Plamen Iliev, \emph{Orthogonal polynomials with exponential decay},  arxive
\bibitem{gn}
J. S.~Geronimo and P. G. Nevai,\emph{Necessary and sufficient conditions relating the coefficients in the recurrence forumla to the spectral function of orthogonal polynomials}, SIAM, J. Math. Anal. \textbf{25}, (1994), 392--419.
\bibitem{ger}
L. Ya. Geronimus and G. Szeg\H o, American Math Soc Transl. Ser. 2 Vol 108,
Amer. Math. Soc.
\bibitem{jn}
J.~Janas and S.~Naboko, \emph{Infinite {J}acobi matrices with unbounded entries: asymptotics
              of eigenvalues and the transformation operator approach}, SIAM J. Math. Anal., \textbf{36}, (2004), 643--658.
\bibitem{ks}
R.~Killip and B.~Simon, \emph{Sum rules for Jacobi matrices and their applications to spectral theory}. Ann. of Math. \textbf{158}, (2003), 253--321.
\bibitem{Arno-Walter}
A. B. J. Kuijlaars and W. Van Assche,
{\it The asymptotic zero distribution of orthogonal polynomials with varying recurrence
coefficients},
J. Approx.\ Theory,  \textbf{99} (1999), 167--197.
\bibitem{l}
Y. Last, \emph{Destruction of absolutely continuous spectrum by perturbation potential of bounded variation}
Comm. Math.\ Phys. \textbf{274}, (2007), 243-252. 
\bibitem{mn}
A. Mate and P. Nevai \emph{Orthogonal polynomials and absolutely continuous measures} Approxation IV, (C. K. Chui el at eds) 611-617 Academic Press, New York. 1983
\bibitem{mnt}
A.Mate,P. Nevai, and V. Totik, \emph{Asymptotics for orthogonal polynomials defined by a recurrence relation} Constr. Approx. \textbf{1}, (1985), 231--248.
\bibitem{ns}
 S.~Naboko and S.~Simonov, \emph{Spectral analysis of a class of {H}ermitian {J}acobi matrices
              in a critical (double root) hyperbolic case}, Proc. Edinb. Math. Soc. (2), \textbf{53}, (2010), 239--254.
\bibitem{nev}
P.~Nevai, \emph{Orthogonal Polynomials}, Mem. Amer. Math. Soc. \textbf{213}, (1979).
\bibitem{Va} W. Van Assche,
{\it Asymptotics for orthogonal polynomials and three-term recurrences},
 in ``Orthogonal Polynomials: Theory and Practice'' (P. Nevai, ed.),
Nato ASI series \textbf{294} Kluwer, Dordrecht, 1990, pp.~435--462.
\bibitem{vg}
W. Van Assche and J.S.~Geronimo, \emph{Asymptotics for orthogonal polynomials on and of the essential spectrum}, J. Approx. Theory, \textbf{4}, (1988), 220--231.





    




\end{thebibliography}
\end{document}